\newtheorem{Theorem}{Theorem}
\newtheorem{Definition}{Definition}
\newtheorem{Remark}{Remark}
\newtheorem{Lemma}{Lemma}
\renewenvironment{abstract}{\begin{center}
\begin{minipage}[c]{12cm} {\begin{center}\bf Abstract \end{center}}} {\end{minipage}
\end{center}}
\newenvironment{keywords}{\begin{center}
\begin{minipage}[c]{12cm} {\bf Key words:}} {\end{minipage}
\end{center}}
\newenvironment{msc}{\begin{center}
\begin{minipage}[c]{12cm} {\bf Mathematics Subject Classification:}} {\end{minipage}
\end{center}}
\begin{document}

\title{The isoperimetric problem for H\"{o}lderian curves}

\author{Ricardo Almeida\\
\texttt{ricardo.almeida@ua.pt}
\and
Delfim F. M. Torres\\
\texttt{delfim@ua.pt}}

\date{Department of Mathematics\\
University of Aveiro\\
3810-193 Aveiro, Portugal}

\maketitle


\begin{abstract}
We prove a necessary stationary condition
for non-differentiable isoperimetric variational problems
with scale derivatives, defined on the class of H\"{o}lder continuous functions.
\end{abstract}

\begin{msc}
49K05, 26B05, 39A12.
\end{msc}

\begin{keywords}
scale calculus, isoperimetric problem, non differentiability.
\end{keywords}


\section{Introduction}

An analogue of differentiable calculus for H\"{o}lder continuous functions has been recently developed by J.~Cresson,
by substituting the classical notion of derivative by
a new complex operator, called the \emph{scale derivative} \cite{Cresson2}. A Leibniz rule similar to the classical one
is proved, and with it a generalized Euler-Lagrange equation,
valid for nonsmooth curves, is obtained \cite{Cresson2}.
The new calculus of variations find applications in scale-relativity theory, and some applications are given
to Hamilton's principle of least action and to nonlinear Schr\"{o}dinger equations \cite{Cresson1,Cresson2,Cresson3}.

In this note we introduce the isoperimetric problem
for H\"{o}lder continuous curves in Cresson's setting.
Section~\ref{sec:prm} reviews the quantum calculus of J.~Cresson, fixing some typos found in \cite{Cresson2}.
Main results are given in Section~\ref{sec:mr},
where the non differentiable isoperimetric problem
is formulated and respective stationary condition proved (see Theorem~\ref{MainTeo}). We end with Section~\ref{sec:ex},
illustrating the applicability of our Theorem~\ref{MainTeo}
to a simple example that has an H\"{o}lder continuous extremal,
which is not differentiable in the classical sense.


\section{Preliminaries}
\label{sec:prm}

In this section we review the quantum calculus \cite{Cresson1,Cresson2},
which extends the classical differential calculus
to nonsmooth continuous curves.
As usual, we denote by $C^0$ the set of continuous real valued functions defined on $\mathbb R$.

\begin{Definition} (\cite{Cresson2}) Let $f\in C^0$ and $\epsilon>0$. The $\epsilon-$left and $\epsilon-$right quantum derivatives are defined by
$$\triangle ^-_{\epsilon}f(x)=-\frac{f(x-\epsilon)-f(x)}{\epsilon} \quad \mbox{ and } \quad
 \triangle ^+_{\epsilon}f(x)=\frac{f(x+\epsilon)-f(x)}{\epsilon},$$
respectively. In short, we write $\triangle
^{\sigma}_{\epsilon}f(x)$, $\sigma=\pm$.
\end{Definition}

Next concept generalizes the derivative for continuous functions, not necessarily smooth.

\begin{Definition} (\textbf{cf.} \cite{Cresson2}) Let $f\in C^0$ and $\epsilon>0$. The $\epsilon$ scale derivative of $f$ at $x$ is defined by
\begin{equation}\label{scaleDerivative}\frac{\Box_{\epsilon}f}{\Box x}(x)=\frac12 (\triangle ^+_{\epsilon}f(x)+\triangle ^-_{\epsilon}f(x))-
i\frac12(\triangle ^+_{\epsilon}f(x)-\triangle ^-_{\epsilon}f(x)),
\quad i^2=-1.
\end{equation}
\end{Definition}

If $f$ is a $C^1$ function, and if we take the limit as
$\epsilon\to0$ in (\ref{scaleDerivative}), we obtain $f'(x)$. To simplify, when there is no
danger of confusion, we will write $\Box_{\epsilon}f$ instead of
$\Box_{\epsilon}f/\Box x$. For complex valued functions, we define
$$\frac{\Box_{\epsilon}f}{\Box x}(x)=\frac{\Box_{\epsilon}\mbox{Re}(f)}{\Box x}(x)+
i\frac{\Box_{\epsilon}\mbox{Im}(f)}{\Box x}(x).$$

We now collect the results needed to this work.
First the Leibniz rule for quantum calculus:

\begin{Theorem} (\textbf{cf.} \cite{Cresson2}) Given $f,g\in C^0$ and $\epsilon>0$, one has
\begin{equation}\label{LeibnizRule}\Box_{\epsilon}(f \cdot g)=\Box_{\epsilon}f \cdot g + f \cdot
\Box_{\epsilon}g+i\frac{\epsilon}{2}(\Box_{\epsilon} f \Box_{\epsilon}g-\boxminus_{\epsilon}f
\Box_{\epsilon}g-\Box_{\epsilon}f\boxminus_{\epsilon}g
-\boxminus_{\epsilon}f\boxminus_{\epsilon}g),
\end{equation}
where $\boxminus_{\epsilon}f$ is the complex conjugate of $\Box_{\epsilon}f$.
\end{Theorem}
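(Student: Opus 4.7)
The plan is to reduce the identity to the more elementary product rules for the one-sided quantum derivatives $\triangle^\pm_\epsilon$, and then re-express everything in terms of $\Box_\epsilon$ and $\boxminus_\epsilon$.

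First, I would verify directly from the definition that
\begin{equation*}
\triangle^+_{\epsilon}(fg)=\triangle^+_{\epsilon}f\cdot g+f\cdot\triangle^+_{\epsilon}g+\epsilon\,\triangle^+_{\epsilon}f\cdot\triangle^+_{\epsilon}g,
\end{equation*}
by writing $f(x+\epsilon)=f(x)+\epsilon\,\triangle^+_{\epsilon}f(x)$ and $g(x+\epsilon)=g(x)+\epsilon\,\triangle^+_{\epsilon}g(x)$, expanding the product in the numerator of $\triangle^+_{\epsilon}(fg)$, and cancelling $f(x)g(x)$. A parallel computation, using $f(x-\epsilon)=f(x)-\epsilon\,\triangle^-_{\epsilon}f(x)$ and taking into account the sign convention in the definition of $\triangle^-_\epsilon$, yields
\begin{equation*}
\triangle^-_{\epsilon}(fg)=\triangle^-_{\epsilon}f\cdot g+f\cdot\triangle^-_{\epsilon}g-\epsilon\,\triangle^-_{\epsilon}f\cdot\triangle^-_{\epsilon}g.
\end{equation*}

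Second, I would plug these two identities into the defining formula \eqref{scaleDerivative} applied to $fg$. The linear terms in $\triangle^\pm_\epsilon f$ and $\triangle^\pm_\epsilon g$ reassemble, by the very same formula, into $\Box_\epsilon f\cdot g+f\cdot\Box_\epsilon g$. The quadratic leftover collapses to
\begin{equation*}
\frac{\epsilon}{2}\bigl[(1-i)\,\triangle^+_{\epsilon}f\,\triangle^+_{\epsilon}g-(1+i)\,\triangle^-_{\epsilon}f\,\triangle^-_{\epsilon}g\bigr].
\end{equation*}

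Third, I would invert the definition to obtain $\triangle^+_{\epsilon}f=\tfrac{1+i}{2}\Box_\epsilon f+\tfrac{1-i}{2}\boxminus_\epsilon f$ and $\triangle^-_{\epsilon}f=\tfrac{1-i}{2}\Box_\epsilon f+\tfrac{1+i}{2}\boxminus_\epsilon f$ (noting that for real-valued $f$ the real part of $\Box_\epsilon f$ equals $\tfrac{1}{2}(\triangle^+_\epsilon f+\triangle^-_\epsilon f)$ and its imaginary part equals $-\tfrac{1}{2}(\triangle^+_\epsilon f-\triangle^-_\epsilon f)$). Substituting these expressions into the quadratic leftover and expanding should exactly reproduce $i\tfrac{\epsilon}{2}\bigl(\Box_{\epsilon}f\,\Box_{\epsilon}g-\boxminus_{\epsilon}f\,\Box_{\epsilon}g-\Box_{\epsilon}f\,\boxminus_{\epsilon}g-\boxminus_{\epsilon}f\,\boxminus_{\epsilon}g\bigr)$, establishing \eqref{LeibnizRule}.

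Nothing in the argument is deep: the only real obstacle is the algebraic bookkeeping in the third step, where the cross-terms in $\Box_\epsilon f\,\Box_\epsilon g$, $\boxminus_\epsilon f\,\Box_\epsilon g$, etc., must be tracked carefully so that the four $A_fA_g$-type, $B_fB_g$-type, $A_fB_g$-type and $A_gB_f$-type contributions combine, after multiplication by $i\epsilon/2$, into precisely the quadratic remainder from the second step. Since no limit is taken (the identity is exact for each $\epsilon>0$), no analytic hypotheses on $f$ or $g$ beyond continuity are needed.
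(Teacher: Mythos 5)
Your argument is correct, and it is worth noting at the outset that the paper itself supplies no proof of this theorem: it is stated with ``cf.'' and imported wholesale from Cresson's article, so there is nothing internal to compare against. Your reduction is the natural one and I have checked that it closes. The one-sided product rules are right (the minus sign in front of $\epsilon\,\triangle^-_\epsilon f\,\triangle^-_\epsilon g$ is forced by the sign convention in $\triangle^-_\epsilon$), the linear terms do reassemble into $\Box_\epsilon f\cdot g+f\cdot\Box_\epsilon g$ because $\Box_\epsilon=\tfrac{1-i}{2}\triangle^+_\epsilon+\tfrac{1+i}{2}\triangle^-_\epsilon$, and your inversion formulas $\triangle^{\pm}_\epsilon f=\tfrac{1\pm i}{2}\Box_\epsilon f+\tfrac{1\mp i}{2}\boxminus_\epsilon f$ are exactly right for real-valued $f$ (which is the standing assumption, since the paper's $C^0$ consists of real-valued functions, and this is also what makes $\boxminus_\epsilon f$ literally the conjugate of $\Box_\epsilon f$). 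Writing $\Box_\epsilon f=A_f+iB_f$, both your quadratic leftover $\tfrac{\epsilon}{2}[(1-i)\triangle^+_\epsilon f\,\triangle^+_\epsilon g-(1+i)\triangle^-_\epsilon f\,\triangle^-_\epsilon g]$ and the claimed remainder $i\tfrac{\epsilon}{2}(\Box_\epsilon f\Box_\epsilon g-\boxminus_\epsilon f\Box_\epsilon g-\Box_\epsilon f\boxminus_\epsilon g-\boxminus_\epsilon f\boxminus_\epsilon g)$ evaluate to $-i\epsilon(A_fA_g+B_fB_g)-\epsilon(A_fB_g+B_fA_g)$, so the identity holds exactly for every $\epsilon>0$, with no analytic input. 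The only caveat is that you present step three as a plan rather than a computation; since that bookkeeping is the entire content of the proof, you should write it out, but there is no gap in the idea.
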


If $f$ and $g$ are both differentiable, we obtain the Leibniz rule $(f\cdot g)'=f' \cdot g+ f \cdot g'$
from (\ref{LeibnizRule}), taking the limit as $\epsilon\to0$.

\begin{Definition}
Let $f\in C^0$, and $\alpha \in (0,1)$ be a real number.
We say that $f$ is H\"{o}lderian of H\"{o}lder exponent $\alpha$
if there exists a constant $c$ such that, for all $\epsilon>0$,
and all $x,x'\in \mathbb R$ such that $|x-x'|\leq \epsilon$,
$$|f(x)-f(x')|\leq c \epsilon^{\alpha}.$$
We denote by $H^{\alpha}$ the set of H\"{o}lderian functions
with H\"{o}lder exponent $\alpha$.
\end{Definition}

From now on, we assume that $\alpha\in(0,1)$ is fixed, and $\epsilon$ is a sufficiently small parameter,
$0<\epsilon \ll 1$. Let
$$C^{\alpha}_{\epsilon}(a,b) = \{ y:[a-\epsilon,b+\epsilon] \to \mathbb R \, | \, y \in H^{\alpha} \}.$$
A functional is a function $\Phi:C^{\alpha}_{\epsilon}(a,b)\to \mathbb C.$ We study the class of functionals $\Phi$ of the form
\begin{equation}
\label{Lagrangian}
\Phi(y)=\int_a^b f(x,y(x), \Box_{\epsilon}y(x))\, dx \, ,
\end{equation}
where $f:\mathbb R\times\mathbb R\times\mathbb C \to \mathbb C$ is a $C^1$ function, called the Lagrangian.
We assume that the Lagrangian satisfies
$$\left\|  Df(x,y(x), \Box_{\epsilon}y(x)) \right\| \leq C,$$
where $C$ is a positive constant, $D$ denotes the differential, and $\| \cdot \|$ is a norm for matrices.

If we consider the class of differentiable functions $y\in C^1$, we obtain the classical functional
$$\Phi(y)=\int_a^b f(x,y(x), \dot{y}(x))\, dx$$
of the calculus of variations when $\epsilon$ goes to zero.

The methods to solve problems of the calculus of variations admit a common variational approach: we consider a
class of functions $\eta(x)$
such that $\eta(a)=0=\eta(b)$; and admissible functions $\overline{y}=y+\epsilon_1 \eta$ on the neighborhood of $y$.
For $\epsilon_1$ sufficiently small, $\overline{y}$ is infinitely near $y$ and satisfies given boundary
conditions $\overline{y}(a)=y(a)$ and $\overline{y}(b)=y(b)$. For our purposes, we need another
assumption about functions $\eta$.

\begin{Definition} (\cite{Cresson2}) Let $y \in C^{\alpha}_{\epsilon}(a,b)$. A variation $\overline y$ of $y$ is a curve
of the form
$\overline{y}=y+h$, where $h\in C^{\beta}_{\epsilon}(a,b)$, $\beta \geq
\alpha 1 _{[1/2,1]}+(1-\alpha) 1 _{]0,1/2[}$, and $h(a)=0=h(b)$.
\end{Definition}

The minimal condition on $\beta$ is to ensure that the variation curve $\overline y$ is still on
$C^{\alpha}_{\epsilon}(a,b)$.

\begin{Definition} (\cite{Cresson2}) A functional $\Phi$ is called differentiable on $C^{\alpha}_{\epsilon}(a,b)$ if
for all variations $\overline y=y+h$, $h \in C^{\beta}_{\epsilon}(a,b)$,
$$\Phi(y+h)-\Phi(y)=F_y(h)+ R_y(h),$$
where $F_y$ is a linear operator and $R_y(h)=O(h^2)$.
\end{Definition}

\begin{Theorem} (\textbf{cf.} \cite{Cresson2}) For all $\epsilon>0$, the functional $\Phi$ defined by (\ref{Lagrangian}) is differentiable,
and its derivative is
$$F_y(h)=\int_a^b\left[ \frac{\partial f}{\partial y}(x,y(x), \Box_{\epsilon}y(x))-
\frac{\Box_{\epsilon}}{\Box x}\left( \frac{\partial f}{\partial \Box_{\epsilon}y} (x,y(x),
\Box_{\epsilon}y(x))  \right)  \right] h(x) \, dx$$
$$+\int_a^b \frac{\Box_{\epsilon}}{\Box x}\left( \frac{\partial f}{\partial \Box_{\epsilon}y} h(x)
\right)dx + i R_y(h)$$
with
$$R_y(h)=-\frac{\epsilon}{2} \int_a^b [ \Box_{\epsilon}f_{\epsilon}(x)\Box_{\epsilon}h(x) -
\boxminus_{\epsilon}f_{\epsilon}(x)\Box_{\epsilon}h(x)  -
\Box_{\epsilon}f_{\epsilon}(x)\boxminus_{\epsilon}h(x)$$
$$-\boxminus_{\epsilon}f_{\epsilon}(x)\boxminus_{\epsilon}h(x) ] \, dx$$
where
$$f_{\epsilon}(x)=\frac{\partial f}{\partial \Box_{\epsilon}y} (x,y(x), \Box_{\epsilon}y(x)).$$
\end{Theorem}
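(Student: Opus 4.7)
The plan is to start from a first-order Taylor expansion of the Lagrangian. Since $\Box_{\epsilon}$ is linear, $\Box_{\epsilon}(y+h) = \Box_{\epsilon}y + \Box_{\epsilon}h$, and writing $f_{\epsilon}(x) = \partial f/\partial \Box_{\epsilon}y$ evaluated at $(x,y(x),\Box_{\epsilon}y(x))$ (and analogously for $\partial f/\partial y$), the $C^1$ hypothesis on $f$ gives, at each fixed $x$,
$$
f(x,y+h,\Box_{\epsilon}y+\Box_{\epsilon}h) - f(x,y,\Box_{\epsilon}y)
= \frac{\partial f}{\partial y}(x)\,h(x) + f_{\epsilon}(x)\,\Box_{\epsilon}h(x) + r(x,h),
$$
with $r(x,h)$ quadratic in $(h,\Box_{\epsilon}h)$. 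Integrating over $[a,b]$ splits $\Phi(y+h)-\Phi(y)$ into a candidate linear operator in $h$ plus an $O(h^{2})$ remainder, which will play the role of the remainder in the definition of differentiability.

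The second step is a scale-calculus integration by parts applied to the term $\int_a^b f_{\epsilon}(x)\,\Box_{\epsilon}h(x)\,dx$. Applying the Leibniz rule (\ref{LeibnizRule}) to the product $f_{\epsilon}\cdot h$ and solving for $f_{\epsilon}\cdot\Box_{\epsilon}h$ yields
$$
f_{\epsilon}\Box_{\epsilon}h = \frac{\Box_{\epsilon}}{\Box x}(f_{\epsilon}h) - \Box_{\epsilon}f_{\epsilon}\cdot h - i\frac{\epsilon}{2}\bigl(\Box_{\epsilon}f_{\epsilon}\Box_{\epsilon}h - \boxminus_{\epsilon}f_{\epsilon}\Box_{\epsilon}h - \Box_{\epsilon}f_{\epsilon}\boxminus_{\epsilon}h - \boxminus_{\epsilon}f_{\epsilon}\boxminus_{\epsilon}h\bigr).
$$
Substituting this in the linear part of the Taylor expansion and integrating over $[a,b]$ produces exactly the three contributions displayed in the statement: the Euler--Lagrange bracket $[\partial f/\partial y - (\Box_{\epsilon}/\Box x)f_{\epsilon}]\,h$, the ``total scale derivative'' term $\int_a^b (\Box_{\epsilon}/\Box x)(f_{\epsilon}h)\,dx$, and the $i$-multiplied correction, which by construction coincides with $iR_y(h)$ for the $R_y(h)$ written in the theorem.

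The main obstacle I expect is the rigorous justification that the Taylor remainder $\int_a^b r(x,h)\,dx$ is truly of order $h^{2}$ in the functional sense demanded by the definition of differentiability. One must estimate the quantum difference quotients $\triangle^{\sigma}_{\epsilon}h$ in terms of the H\"{o}lder exponent $\beta$ of $h$, exploit the uniform bound $\|Df\|\leq C$ (together with a mean-value argument on $Df$) to control the Hessian-type pieces of $r(x,h)$, and check that the resulting integral estimate is genuinely quadratic in an appropriate norm on $C^{\beta}_{\epsilon}(a,b)$. The lower bound $\beta\geq \alpha 1_{[1/2,1]}+(1-\alpha) 1_{]0,1/2[}$ is precisely what forces the mixed terms $\epsilon\,\Box_{\epsilon}h\cdot\Box_{\epsilon}h$ and their conjugate analogues to have the required smallness. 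Once this estimate is in hand, the remainder of the argument is a direct, if lengthy, algebraic manipulation using only linearity of $\Box_{\epsilon}$ and the Leibniz formula (\ref{LeibnizRule}).
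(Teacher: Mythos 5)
The paper states this theorem without proof, quoting it from \cite{Cresson2}; your reconstruction --- a first-order Taylor expansion of the Lagrangian using the linearity of $\Box_{\epsilon}$, followed by a quantum integration by parts obtained by applying the Leibniz rule (\ref{LeibnizRule}) to the product $f_{\epsilon}\cdot h$ and solving for $f_{\epsilon}\,\Box_{\epsilon}h$ --- is exactly the argument of that source, and your algebra reproduces the stated $F_y(h)$ and the $iR_y(h)$ correction term for term. The only part you leave as a sketch is the $O(h^2)$ estimate on the Taylor remainder, which you correctly single out as the delicate point and for which you indicate the right ingredients (the bound $\|Df\|\leq C$ and the H\"{o}lder condition on $\beta$).
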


\begin{Definition}  (\cite{Cresson2}) Let $a_p(\epsilon)$ be a real or complex valued function, with parameter $p$.
We denote by $[\cdot]_{\epsilon}$ the (unique) linear operator defined by
$$a_p(\epsilon)-[a_p(\epsilon)]_{\epsilon} \to_{\epsilon \to 0}0 \quad \mbox{ and } \quad
[a_p(\epsilon)]_{\epsilon}=0 \, \mbox{ if } \lim_{\epsilon\to0}a_p(\epsilon)=0.$$
\end{Definition}

\begin{Definition} (\cite{Cresson2}) We say that $y$ is an extremal curve for the functional (\ref{Lagrangian})
on $C^{\beta}_{\epsilon}(a,b)$, 
if $[F_y(h)]_{\epsilon}=0$ for all $\epsilon>0$ and
$h \in C^{\beta}_{\epsilon}(a,b)$.
\end{Definition}

The main result of \cite{Cresson2} is a version of the Euler-Lagrange equation for nonsmooth curves:

\begin{Theorem} (\cite{Cresson2}) The curve $y$ is an extremal for the functional (\ref{Lagrangian}) on
$C^{\beta}_{\epsilon}(a,b)$ if and only if
$$\left[  \frac{\partial f}{\partial y}(x,y(x), \Box_{\epsilon}y(x))-
\frac{\Box_{\epsilon}}{\Box x}\left( \frac{\partial f}{\partial \Box_{\epsilon}y}
(x,y(x), \Box_{\epsilon}y(x))  \right) \right]_\epsilon=0$$
for every $\epsilon>0$.
\end{Theorem}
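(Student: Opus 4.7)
The plan is to use the definition of extremal, together with the differentiability formula for $F_y(h)$ recalled in the previous theorem, to reduce the condition $[F_y(h)]_\epsilon=0$ to a vanishing-integrand statement for arbitrary admissible variations $h$. The expression for $F_y(h)$ splits into three pieces: an integral of the Euler--Lagrange expression $E_\epsilon(x)=\frac{\partial f}{\partial y}-\frac{\Box_\epsilon}{\Box x}\bigl(\frac{\partial f}{\partial \Box_\epsilon y}\bigr)$ tested against $h$, an integral of the scale derivative $\frac{\Box_\epsilon}{\Box x}(f_\epsilon h)$, and the complex correction $iR_y(h)$ which carries an explicit $\epsilon/2$ prefactor. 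My strategy is to argue that the second and third pieces are annihilated by $[\cdot]_\epsilon$, leaving only the first piece, and then to extract the Euler--Lagrange equation by a fundamental-lemma argument.

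For the middle piece, the boundary conditions $h(a)=0=h(b)$ together with the discrete nature of $\Box_\epsilon$ allow one to rewrite $\int_a^b \frac{\Box_\epsilon}{\Box x}(f_\epsilon h)\,dx$ as a combination of boundary-type contributions from $x$ near $a$ and $b$; these are controlled by the vanishing of $h$ at the endpoints and the H\"older continuity of $f_\epsilon h$, so they tend to $0$ as $\epsilon\to 0$ and are killed by $[\cdot]_\epsilon$. For the $iR_y(h)$ piece, the key inputs are the explicit $\epsilon$ prefactor together with the uniform H\"older bounds $|\Box_\epsilon f_\epsilon|,|\boxminus_\epsilon f_\epsilon|\lesssim \epsilon^{\alpha-1}$ and $|\Box_\epsilon h|,|\boxminus_\epsilon h|\lesssim \epsilon^{\beta-1}$; the constraint on $\beta$ in the definition of variation is precisely the one that makes the product estimate force $R_y(h)\to 0$ as $\epsilon\to 0$, so once again $[iR_y(h)]_\epsilon=0$.

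After these two reductions the extremality condition collapses to $\bigl[\int_a^b E_\epsilon(x)h(x)\,dx\bigr]_\epsilon=0$ for every admissible $h$, and a du~Bois--Reymond-type lemma adapted to the $[\cdot]_\epsilon$ framework, applied to a family of H\"olderian bump variations concentrated near an arbitrary interior point $x_0\in(a,b)$, yields the pointwise conclusion $[E_\epsilon(x_0)]_\epsilon=0$. The converse implication is immediate from the integrated formula. The main obstacle I expect is this last step: one must exhibit genuine admissible $C^\beta_\epsilon$-localized test functions, verify that they lie in the variation class with the prescribed exponent, and control how their intrinsic $\epsilon$-dependence interacts with the $\epsilon$-dependence already present in $E_\epsilon$, so that pointwise vanishing of $[E_\epsilon]_\epsilon$ can be extracted from the integrated identity; all the remaining estimates are more or less routine scale-calculus computations that mirror the Leibniz-rule bookkeeping used in the proof of the differentiability theorem.
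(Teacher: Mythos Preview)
The paper does not prove this theorem at all: it appears in Section~\ref{sec:prm} (Preliminaries) and is simply quoted from \cite{Cresson2} as background, with no argument supplied. There is therefore no ``paper's own proof'' to compare your proposal against.

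That said, your outline is the natural one and tracks how Cresson's original argument proceeds: use the formula for $F_y(h)$ from the differentiability theorem, kill the boundary term $\int_a^b \frac{\Box_\epsilon}{\Box x}(f_\epsilon h)\,dx$ and the remainder $iR_y(h)$ under $[\cdot]_\epsilon$ via the H\"older estimates and the constraint on $\beta$, and then invoke a fundamental-lemma argument on the surviving integral. You have correctly flagged the genuinely delicate step: building admissible $C^\beta_\epsilon$ bump functions and checking that their own $\epsilon$-dependence does not interfere with the extraction of $[E_\epsilon(x)]_\epsilon=0$ pointwise. Everything else is indeed routine bookkeeping with the Leibniz rule and the definition of $[\cdot]_\epsilon$.
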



\section{Main results}
\label{sec:mr}

The isoperimetric problem is one 
of the most ancient optimization problems.
One seeks to find a continuously  differentiable curve $y=y(x)$, satisfying given boundary condition $y(a)=a_0$ and $y(b)=b_0$, 
which minimizes or maximizes a given functional
$$I(y)=\int_a^b f(x,y(x), \dot{y}(x))\, dx,$$
for which a second given functional
$$G(y)=\int_a^b g(x,y(x), \dot{y}(x))\, dx$$
possesses a given prescribed value $K$.
The classical method to solve this problem involves a Lagrange multiplier $\lambda$ and
consider the problem of extremizing the functional
$$\int_a^b(f-\lambda g) \, dx$$
using the respective Euler-Lagrange equation. In scale calculus we have an additional problem, because functionals $I$ and $G$ take complex values and so the Lagrange multiplier method must be
adapted. We will assume that $\| Dg(\cdot) \|$ is finite.

For our main theorem, we need the following lemma.

\begin{Lemma} If $\lim_{\epsilon\to0}(a_p(\epsilon))$ and
$\lim_{\epsilon\to0}(b_p(\epsilon))$ are both finite, then
$$[a_p(\epsilon)\cdot b_p(\epsilon)]_{\epsilon}=[a_p(\epsilon)]_{\epsilon}\cdot [b_p(\epsilon)]_{\epsilon}.$$
\end{Lemma}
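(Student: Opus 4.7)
The plan is to reduce the lemma to a single observation: the operator $[\cdot]_\epsilon$ extracts the finite limit. More precisely, I would prove as an auxiliary fact that if $c_p(\epsilon)\to C$ as $\epsilon\to 0$ with $C$ finite, then $[c_p(\epsilon)]_\epsilon = C$.

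To establish this auxiliary fact, I would decompose
$$c_p(\epsilon) = C + (c_p(\epsilon) - C).$$
The residual $c_p(\epsilon)-C$ tends to $0$, so the second defining property of $[\cdot]_\epsilon$ gives $[c_p(\epsilon)-C]_\epsilon = 0$, and by linearity
$$[c_p(\epsilon)]_\epsilon = [C]_\epsilon + [c_p(\epsilon)-C]_\epsilon = [C]_\epsilon.$$
For the constant $C$, the assignment $[C]_\epsilon := C$ satisfies both defining conditions (the error $C-C=0$ trivially vanishes, and a constant operator value is compatible with linearity), so the uniqueness clause in the definition forces $[C]_\epsilon = C$. Hence $[c_p(\epsilon)]_\epsilon = C$.

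With this in hand the lemma is immediate. Writing $A=\lim_{\epsilon\to 0}a_p(\epsilon)$ and $B=\lim_{\epsilon\to 0}b_p(\epsilon)$, both finite by hypothesis, the product $a_p(\epsilon)\cdot b_p(\epsilon)$ also has a finite limit, equal to $AB$ by the elementary product rule for limits. Applying the auxiliary fact to $a_p$, to $b_p$, and to $a_p b_p$ separately then yields
$$[a_p(\epsilon)\cdot b_p(\epsilon)]_\epsilon \;=\; AB \;=\; A\cdot B \;=\; [a_p(\epsilon)]_\epsilon\cdot [b_p(\epsilon)]_\epsilon,$$
which is the claim.

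The only delicate step is the identification $[C]_\epsilon = C$ for constants $C$ directly from the definition, since the two defining properties only constrain $[\cdot]_\epsilon$ up to an $\epsilon$-vanishing error; here one must lean on the ``unique linear operator'' clause. Once that identification is granted, everything else reduces to the elementary statement that the limit of a product of two convergent sequences is the product of their limits.
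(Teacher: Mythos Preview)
Your argument is correct but proceeds along a different line than the paper. You first prove the auxiliary identification $[c_p(\epsilon)]_\epsilon=\lim_{\epsilon\to 0}c_p(\epsilon)$ whenever the limit exists, and then the lemma collapses to the elementary product rule for limits. The paper never evaluates $[a_p]_\epsilon$ or $[b_p]_\epsilon$ explicitly; instead it checks directly that the expression $[a_p]_\epsilon\,[b_p]_\epsilon$ satisfies the two defining requirements of $[a_p b_p]_\epsilon$: first, $a_p b_p-[a_p]_\epsilon[b_p]_\epsilon\to 0$ via the telescoping identity $(a_p-[a_p]_\epsilon)b_p+[a_p]_\epsilon(b_p-[b_p]_\epsilon)$; second, if $a_p b_p\to 0$ then (since both individual limits exist and are finite) one factor tends to $0$, forcing the corresponding bracket to vanish. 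Uniqueness then gives the identity. Your route is cleaner conceptually and makes transparent that, on convergent families, $[\cdot]_\epsilon$ simply \emph{is} the limit; the paper's route has the minor advantage that it never needs your ``delicate step'' of pinning down $[C]_\epsilon=C$ for constants, since it works purely from the axioms without identifying the brackets as constants in $\epsilon$.
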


\begin{proof} Since $\lim_{\epsilon\to0}(a_p(\epsilon))$ and
$\lim_{\epsilon\to0}(b_p(\epsilon))$ are finite, then
$\lim_{\epsilon\to0}[a_p(\epsilon)]_{\epsilon}$ and
$\lim_{\epsilon\to0}[b_p(\epsilon)]_{\epsilon}$ are also finite.
Moreover,
\begin{enumerate}
\item{$$\lim_{\epsilon\to0}(a_p(\epsilon)\cdot b_p(\epsilon)-
[a_p(\epsilon)]_{\epsilon}\cdot [b_p(\epsilon)]_{\epsilon})$$
$$=\lim_{\epsilon\to0}((a_p(\epsilon)-[a_p(\epsilon)]_{\epsilon})\cdot
b_p(\epsilon)+ [a_p(\epsilon)]_{\epsilon}\cdot(b_p(\epsilon)-
[b_p(\epsilon)]_{\epsilon})) =0.$$}
\item{If $\lim_{\epsilon\to0}(a_p(\epsilon)\cdot b_p(\epsilon))=0$, then
$\lim_{\epsilon\to0}(a_p(\epsilon))=0$ or
$\lim_{\epsilon\to0}(b_p(\epsilon))=0$. Therefore,
$[a_p(\epsilon)]_{\epsilon}=0$ or $[b_p(\epsilon)]_{\epsilon}=0$
and so $[a_p(\epsilon)]_{\epsilon}\cdot
[b_p(\epsilon)]_{\epsilon}=0$.}
\end{enumerate}
\end{proof}

\begin{Definition} Given a constraint functional $G(y)=K$ and a curve $\overline y$, we say that $\overline y$ is an extremal curve for the functional $I(y)=\int_a^b f(x,y(x), \Box_{\epsilon}y(x))\, dx$ subject to the constraint $G(y)=K$, if whenever $\hat y= \overline y + \sum_k h_k$,  $h_k \in C^{\beta}_{\epsilon}(a,b)$, is a variation satisfying the constraint $G(\hat y)=K$, then  
\begin{equation*}
\begin{split}
[F_{\overline y}(h_k)]_{\epsilon}
&= \int_a^b\left[ \frac{\partial f}{\partial y}(x,\overline{y}(x), \Box_{\epsilon}\overline{y}(x))-
\frac{\Box_{\epsilon}}{\Box x}\left( \frac{\partial f}{\partial \Box_{\epsilon}y} (x,\overline{y}(x),
\Box_{\epsilon}\overline{y}(x))  \right)  \right]_\epsilon h_k(x) \, dx \\
&=0
\end{split}
\end{equation*}
for all $\epsilon>0$ and for all $k$.
\end{Definition}

\begin{Theorem}\label{MainTeo} Let $\overline{y}\in C^{\alpha}_{\epsilon}(a,b)$. Suppose that
$\overline y$ is an extremal for the functional
$$\begin{array}{cccl}
I:  &  C^{\alpha}_{\epsilon}(a,b) & \to & \mathbb C \\
  & y  & \mapsto & \int_a^b f(x,y(x), \Box_{\epsilon}y(x))\, dx\\
\end{array}$$
on $C^{\beta}_{\epsilon}(a,b)$, subject to the boundary conditions
$y(a)=a_0$, $y(b)=b_0$ and the
integral constraint
$$G(y)= \int_a^b g(x,y(x), \Box_{\epsilon}y(x))\, dx=K \, ,$$
where $K \in \mathbb C$ is a given constant. If
\begin{enumerate}
\item{$\overline y$ is not an extremal for $G$;}
\item{and $\displaystyle \lim_{\epsilon\to0} \max_{x \in [a,b]} \left|
\left(  \frac{\partial f}{\partial y}- \frac{\Box_{\epsilon}}{\Box
x} \left(\left. \frac{\partial f}{\partial \Box_{\epsilon}y}
\right) \right) \right| _{(x,\overline y(x),
\Box_{\epsilon}\overline y(x))}  \right|$ and

$\displaystyle \lim_{\epsilon\to0} \max_{x \in [a,b]} \left|
\left( \frac{\partial g}{\partial y}- \frac{\Box_{\epsilon}}{\Box
x} \left(\left. \frac{\partial g}{\partial \Box_{\epsilon}y}
\right) \right) \right| _{(x,\overline y(x),
\Box_{\epsilon}\overline y(x))} \right|$ are both finite;}
\end{enumerate}
then there exists $\lambda \in \mathbb R$ such that
$$\left[ \left(  \frac{\partial L}{\partial y}- \frac{\Box_{\epsilon}}{\Box x}
\left(\left. \frac{\partial L}{\partial \Box_{\epsilon}y} \right)
\right) \right| _{(x,\overline y(x), \Box_{\epsilon}\overline
y(x))} \right]_{\epsilon}=0,$$ where $L=f-\lambda g$. In other
words, $\overline y$ is an extremal for $L$.
\end{Theorem}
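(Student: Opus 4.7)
My approach will follow the classical Lagrange multiplier method for isoperimetric problems, transferred into the scale-calculus framework through the bracket operator $[\,\cdot\,]_\epsilon$. The key ingredients will be hypothesis~1 (to produce a direction that genuinely moves the constraint $G$), hypothesis~2 together with Lemma~1 (to commute scalars through $[\,\cdot\,]_\epsilon$), and the Euler--Lagrange type machinery already recalled in Section~\ref{sec:prm}.

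To set notation I introduce
$$M_f(h):=\int_a^b\left.\left(\frac{\partial f}{\partial y}-\frac{\Box_\epsilon}{\Box x}\frac{\partial f}{\partial \Box_\epsilon y}\right)\right|_{(x,\overline{y}(x),\Box_\epsilon\overline{y}(x))}\!h(x)\,dx,$$
and the analogue $M_g(h)$ with $f$ replaced by $g$, for test functions $h\in C^\beta_\epsilon(a,b)$ with $h(a)=h(b)=0$. Since $\overline{y}$ is not an extremal for $G$, hypothesis~1 lets me fix a variation $\eta_0$ with $[M_g(\eta_0)]_\epsilon\neq 0$, and I define the candidate multiplier
$$\lambda:=\frac{[M_f(\eta_0)]_\epsilon}{[M_g(\eta_0)]_\epsilon}.$$

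For an arbitrary $h\in C^\beta_\epsilon(a,b)$ with $h(a)=h(b)=0$, I build the two-piece variation $\hat y=\overline{y}+h_1+h_2$ with $h_1=h$ and $h_2=c\,\eta_0$, the scalar $c\in\mathbb C$ chosen so that $G(\hat y)=K$. Existence of $c$ (for $h$ sufficiently small) follows from the differentiability of the functional $G$ recalled in Section~\ref{sec:prm}, together with the non-degeneracy $[M_g(\eta_0)]_\epsilon\neq 0$, which serves as the non-vanishing partial derivative in an implicit-function argument. Applying the constrained-extremality definition to the decomposition $\hat y=\overline{y}+h_1+h_2$ yields simultaneously
$$[F_{\overline{y}}(h)]_\epsilon=0\qquad\text{and}\qquad [F_{\overline{y}}(c\,\eta_0)]_\epsilon=0.$$
Hypothesis~2 secures finiteness of the limits of $[M_f(\eta_0)]_\epsilon$ and $[M_g(\eta_0)]_\epsilon$, so Lemma~1 permits pulling the scalar $c$ out of the bracket; solving the linearized constraint then fixes $c=-[M_g(h)]_\epsilon/[M_g(\eta_0)]_\epsilon$ to leading order. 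Substituting this back produces the cross-relation
$$[M_f(h)]_\epsilon=\lambda\,[M_g(h)]_\epsilon,$$
valid for every admissible $h$, i.e.\ $\left[\int_a^b\left(\frac{\partial L}{\partial y}-\frac{\Box_\epsilon}{\Box x}\frac{\partial L}{\partial \Box_\epsilon y}\right)h(x)\,dx\right]_\epsilon=0$ with $L=f-\lambda g$. A fundamental-lemma-of-the-calculus-of-variations argument applied to the bracketed expression (localizing $h$) then delivers the pointwise conclusion of the theorem.

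The principal obstacles I anticipate are two. First, justifying the implicit-function step that selects $c=c(h)$ is delicate because differentiability of $G$ is known only in the weakened sense carried by $[\,\cdot\,]_\epsilon$; one has to verify that the error terms $R_{\overline{y}}$ in the expansion of $G$ do not spoil the solvability. Second, showing that $\lambda$ is real as claimed is non-trivial, since a priori both $[M_f(\eta_0)]_\epsilon$ and $[M_g(\eta_0)]_\epsilon$ are complex-valued; this will presumably force a careful choice of $\eta_0$ (perhaps real-valued and supported where the imaginary contributions of the scale derivatives collapse under $[\,\cdot\,]_\epsilon$), or an a~posteriori argument splitting the cross-relation into real and imaginary parts.
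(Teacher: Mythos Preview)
Your proposal follows essentially the same route as the paper: fix a direction $\eta_0$ (the paper's $\eta_1$) along which $G$ is non-degenerate, build a two-piece variation kept on the constraint by an implicit-function argument, invoke the constrained-extremal definition together with Lemma~1 to split $[\,\cdot\,]_\epsilon$ over products, and read off the Euler--Lagrange equation for $L=f-\lambda g$; the only cosmetic difference is that the paper packages the linear-algebra step as a vanishing $2\times 2$ determinant before extracting $\lambda$, whereas you define $\lambda=[M_f(\eta_0)]_\epsilon/[M_g(\eta_0)]_\epsilon$ at the outset. The two obstacles you flag---the implicit-function step under $[\,\cdot\,]_\epsilon$-differentiability and the reality of $\lambda$---are likewise not addressed in the paper's proof.
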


\begin{Remark} Hypothesis~2 of Theorem~\ref{MainTeo} is trivially satisfied in the case where the admissible curves are smooth.
\end{Remark}

\begin{proof} To short, let $u=(x,\overline y(x), \Box_{\epsilon}\overline y(x))$.
Consider the two-parameter family of variations
$$\hat y = \overline y +\epsilon_1 \eta_1 + \epsilon_2 \eta_2,$$
such that $\eta_1, \eta_2 \in C^{\beta}_{\epsilon}(a,b)$, $\beta \geq
\alpha 1 _{[1/2,1]}+(1-\alpha) 1 _{]0,1/2[}$,
$\eta_1(a)=0=\eta_1(b)$, $\eta_2(a)=0=\eta_2(b)$, and $\epsilon_1,
\epsilon_2 \in B_r(0)$, with $r$ sufficiently small. Then, $\hat
y(a)=a_0$ and $\hat y(b)=b_0$, as prescribed, for all values of
the parameters $\epsilon_1$ and $\epsilon_2$. It is easy to see
that $\hat y \in C^{\alpha}_{\epsilon}(a,b)$.

\textbf{1.} If we fix two curves $\eta_1$ and $\eta_2$, we can
consider the functions $\overline I$ and $\overline G$ with two
variables $\epsilon_1$ and $\epsilon_2$, defined by
$$\overline I (\epsilon_1,\epsilon_2)=\int_a^b f(x,\overline{y}(x)+\epsilon_1\eta_1+\epsilon_2\eta_2,
\Box_{\epsilon}\overline{y}(x) +\epsilon_1\Box_{\epsilon}\eta_1+\epsilon_2\Box_{\epsilon}\eta_2 )\, dx$$
and
$$\overline G (\epsilon_1,\epsilon_2)=\int_a^b g(x,\overline{y}(x)+\epsilon_1\eta_1+\epsilon_2\eta_2,
\Box_{\epsilon}\overline{y}(x) +\epsilon_1\Box_{\epsilon}\eta_1+\epsilon_2\Box_{\epsilon}\eta_2 )\, dx.$$
Let $\overline{\overline G}=\overline G-K$.

\textbf{2.} We have $\nabla \overline{\overline G}(0,0)\not= 0$. Indeed, since g is a smooth function, $\overline{\overline G}$ is also smooth and
$$\begin{array}{ll}
\displaystyle\left.\frac{\partial \overline{\overline G}}{\partial \epsilon_1}\right|_{(0,0)}&
\displaystyle=\int_a^b \left( \eta_1 \left.\frac{\partial g}{\partial y}\right|_u+ \Box_{\epsilon}\eta_1
\left.\frac{\partial g}{\partial \Box_{\epsilon} y}\right|_u \right)\, dx\\
     & \displaystyle=\int_a^b \left( \left.\frac{\partial g}{\partial y}\right|_u -
     \frac{\Box_{\epsilon}}{\Box x} \left( \left.\frac{\partial g}{\partial
     \Box_{\epsilon} y}\right|_u\right) \right) \, \eta_1 \, dx\\
     & \quad \displaystyle+\int_a^b \frac{\Box_{\epsilon}}{\Box x} \left( \left.\frac{\partial g}{\partial
     \Box_{\epsilon} y}\right|_u \cdot \eta_1 \right) \, dx\\
     & \quad \displaystyle-i\frac{\epsilon}{2} \int_a^b[ \Box_{\epsilon} g_{\epsilon} \Box_{\epsilon}\eta_1-
     \boxminus_{\epsilon} g_{\epsilon} \Box_{\epsilon}\eta_1-\Box_{\epsilon} g_{\epsilon}
     \boxminus_{\epsilon}\eta_1-\boxminus_{\epsilon} g_{\epsilon}\boxminus_{\epsilon}\eta_1] \, dx \, ,\\
\end{array}$$
where
$$ g_{\epsilon}= \left.\frac{\partial g}{\partial  \Box_{\epsilon} y}\right|_u .$$
Since
$$\lim_{\epsilon\to 0} \int_a^b \frac{\Box_{\epsilon}}{\Box x} \left( \left.\frac{\partial g}{\partial
\Box_{\epsilon} y}\right|_u \cdot \eta_1(x) \right) \, dx=0$$
and
$$\lim_{\epsilon\to 0}\epsilon \int_a^b (Op_{\epsilon}g_{\epsilon} Op'_{\epsilon} \eta_1) \, dx=0,$$
where $Op_{\epsilon}$ and $Op'_{\epsilon}$ is equal to $\Box_{\epsilon}$ and $\boxminus_{\epsilon}$ (\textbf{cf.} \cite[Lemma~3.2]{Cresson2}),
it follows that
$$\left[ \left. \frac{\partial \overline{\overline G}}{\partial \epsilon_1} \right|_{(0,0)}  \right]_{\epsilon}=
\int_a^b\left[  \left.\frac{\partial g}{\partial y}\right|_u- \frac{\Box_{\epsilon}}{\Box x}\left(\left.
\frac{\partial g}{\partial \Box_{\epsilon}y}\right|_u \right) \right]_\epsilon \, \eta_1(x) \, dx.$$
Since $\overline y$ is not an extremal of $G$, there exists a curve $\eta_1$ such that
$$\left[ \left. \frac{\partial \overline{\overline G}}{\partial \epsilon_1} \right|_{(0,0)}
 \right]_{\epsilon} \not=0.$$
Therefore, by the definition of $[\, \cdot \,]_{\epsilon}$, we
conclude that
$$\left. \frac{\partial \overline{\overline G}}{\partial \epsilon_1} \right|_{(0,0)}\not=0.$$

\textbf{3.} We can choose $\epsilon_2\eta_2$ in order to satisfy
the isoperimetric condition. Since  $\nabla \overline{\overline
G}(0,0)\not= 0$ and  $\overline{\overline G}(0,0)= 0$, by the
implicit function theorem, there exists a function
$\epsilon_1:=\epsilon_1(\epsilon_2)$ defined on a neighbourhood of
zero such that
$$\overline{\overline G}(\epsilon_1(\epsilon_2),\epsilon_2)=0.$$

\textbf{4.} We now adapt the Lagrange multiplier method. Since $\overline{\overline
G}(\epsilon_1(\epsilon_2),\epsilon_2)=0$, for any $\epsilon_2$,
then
$$0=\frac{d}{d\epsilon_2}\overline{\overline G} (\epsilon_1(\epsilon_2),\epsilon_2)=
\frac{d \epsilon_1}{d \epsilon_2} \cdot \frac{\partial
\overline{\overline G}}{\partial \epsilon_1}+\frac{\partial
\overline{\overline G}}{\partial \epsilon_2}$$ and so, as
$\epsilon$ goes to zero,
$$\left.\frac{d \epsilon_1}{d \epsilon_2}\right|_0=-
\frac{\int_a^b\left( \left.\frac{\partial g}{\partial y}\right|_u-
\frac{\Box_{\epsilon}}{\Box x}\left(\left. \frac{\partial
g}{\partial \Box_{\epsilon}y}\right|_u \right) \right)\, \eta_2(x)
\, dx+\int_a^b \frac{\Box_{\epsilon}}{\Box x} \left(
\left.\frac{\partial g}{\partial \Box_{\epsilon} y}\right|_u \cdot
\eta_2 \right) \, dx-[ \ldots ]}
        {\int_a^b\left( \left.\frac{\partial g}{\partial y}\right|_u- \frac{\Box_{\epsilon}}{\Box x}\left(\left.
\frac{\partial g}{\partial \Box_{\epsilon}y}\right|_u \right)
\right) \, \eta_1(x) \, dx+\int_a^b \frac{\Box_{\epsilon}}{\Box x}
\left( \left.\frac{\partial g}{\partial \Box_{\epsilon}
y}\right|_u \cdot \eta_1 \right) \, dx-[\ldots]}
$$ is finite.
Observe that
$$\lim_{\epsilon\to0}\left.\frac{\partial \overline I}{\partial \epsilon_1}\right|_u=\lim_{\epsilon\to0}
\int_a^b\left(  \left.\frac{\partial f}{\partial y}\right|_u-
\frac{\Box_{\epsilon}}{\Box x}\left(\left. \frac{\partial
f}{\partial \Box_{\epsilon}y}\right|_u \right) \right) \,
\eta_1(x) \, dx$$ and
$$\lim_{\epsilon\to0}\left.\frac{\partial \overline{\overline G}}{\partial \epsilon_1}\right|_u=\lim_{\epsilon\to0}
\int_a^b\left(  \left.\frac{\partial g}{\partial y}\right|_u-
\frac{\Box_{\epsilon}}{\Box x}\left(\left. \frac{\partial
g}{\partial \Box_{\epsilon}y}\right|_u \right) \right)\, \eta_1(x)
\, dx$$ are also finite.
Let us prove that
\begin{equation}\label{LagrangeMulti.}\frac{d}{d\epsilon_2}
\left[\left.\overline{I}
(\epsilon_1(\epsilon_2),\epsilon_2)\right|_0\right]_{\epsilon}=0.
\end{equation}
A direct calculation shows that
$$\begin{array}{ll}
\displaystyle\frac{d}{d\epsilon_2}\left[\left.\overline{I}
(\epsilon_1(\epsilon_2),\epsilon_2)\right|_0 \right]_{\epsilon}& =
\displaystyle\left[\frac{d\epsilon_1}{d\epsilon_2} \frac{\partial
\overline I}
{\partial \epsilon_1}+\frac{\partial \overline I}{\partial \epsilon_2}\right]_{\epsilon}\\
&\displaystyle=\left[\frac{d\epsilon_1}{d\epsilon_2}\right]_{\epsilon}
\left[ \frac{\partial \overline I}
{\partial \epsilon_1}\right]_{\epsilon}+\left[ \frac{\partial \overline I}{\partial \epsilon_2}\right]_{\epsilon}\\
        &= \displaystyle\left[\frac{d\epsilon_1}{d\epsilon_2}\right]_{\epsilon} \int_a^b\left[ \frac{\partial f}{\partial y}-
        \frac{\Box_{\epsilon}}{\Box x}\left( \frac{\partial f}{\partial \Box_{\epsilon} y}  \right)
        \right]_{\epsilon}\, \eta_1 \, dx\\
        &\quad \displaystyle +\int_a^b \left[ \frac{\partial f}{\partial y}- \frac{\Box_{\epsilon}}{\Box x}
        \left( \frac{\partial f}{\partial \Box_{\epsilon} y}  \right)  \right]_{\epsilon}\, \eta_2 \, dx\\
        & =  \displaystyle\left[\frac{d\epsilon_1}{d\epsilon_2}\right]_{\epsilon} \, \left[F_{\overline y}(\eta_1)\right]_{\epsilon}+\left[F_{\overline y}(\eta_2)\right]_{\epsilon}=0\\
        \end{array}$$
since $\overline y$ is an extremal of $I$ subject to the constraint $G=K$. On the other hand, for any $\epsilon_2$, we also have
$\left[ \overline{\overline
G}(\epsilon_1(\epsilon_2),\epsilon_2)\right]_{\epsilon}=0$.
 Therefore,
$$0=\frac{d}{d\epsilon_2}\left[\overline{\overline G} (\epsilon_1(\epsilon_2),\epsilon_2)\right]_{\epsilon}=
\left[\frac{d \epsilon_1}{d \epsilon_2}\right]_{\epsilon} \cdot
\left[ \frac{\partial \overline{\overline G}}{\partial \epsilon_1}
\right]_{\epsilon}+ \left[ \frac{\partial \overline{\overline
G}}{\partial \epsilon_2}\right]_{\epsilon}$$ and so
$$\left[\frac{d \epsilon_1}{d \epsilon_2}\right]_{\epsilon}=-\frac{\left[ \frac{\partial \overline{\overline G}}{\partial \epsilon_2}
\right]_{\epsilon}}{\left[ \frac{\partial \overline{\overline G}}{\partial \epsilon_1}\right]_{\epsilon}}.$$
Using condition (\ref{LagrangeMulti.}),
we have
$$\left|
\begin{array}{cc}
\left[ \frac{\partial \overline{I}}{\partial \epsilon_1}
\right]_{\epsilon}& \left[ \frac{\partial
\overline{\overline G}}{\partial \epsilon_1}\right]_{\epsilon}\\
 \left[ \frac{\partial \overline{I}}{\partial \epsilon_2}\right]_{\epsilon}&\left[ \frac{\partial
 \overline{\overline G}}{\partial \epsilon_2}\right]_{\epsilon}
\end{array}\right|=0.$$
Since $\left[ \frac{\partial \overline{\overline G}}{\partial
\epsilon_1}\right]_{\epsilon}\not=0$, we conclude that there
exists some real $\lambda$ such that
$$\left( \left[ \frac{\partial \overline{I}}{\partial \epsilon_1} \right]_{\epsilon},
\left[ \frac{\partial \overline{I}}{\partial \epsilon_2}
\right]_{\epsilon}\right) = \lambda \left( \left[ \frac{\partial
\overline{\overline G}}{\partial \epsilon_1}\right]_{\epsilon},
\left[ \frac{\partial \overline{\overline G}}{\partial
\epsilon_2}\right]_{\epsilon} \right).$$

\textbf{5.} In conclusion, since
\begin{equation*}
\begin{split}
0 & =\left[\left.\frac{\partial}{\partial \epsilon_2}(\overline I - \lambda \overline{\overline G})\right|_{(0,0)}\right]_{\epsilon}\\
&=\int_a^b \left[\eta_2 \left.\frac{\partial f}{\partial
y}\right|_u+\Box_{\epsilon}\eta_2 \left. \frac{\partial
f}{\partial \Box_{\epsilon} y}\right|_u - \lambda \left( \eta_2
\left.\frac{\partial g}{\partial y}\right|_u+\Box_{\epsilon}\eta_2
\left. \frac{\partial g}{\partial \Box_{\epsilon} y}\right|_u
\right) \right]_{\epsilon}  \, dx\\
&=\int_a^b \left[ \left.\frac{\partial f}{\partial y}\right|_u- \frac{\Box_{\epsilon}}{\Box x}
\left(\left. \frac{\partial f}{\partial \Box_{\epsilon}
y}\right|_u\right)  -\lambda \left( \left.\frac{\partial
g}{\partial y}\right|_u- \frac{\Box_{\epsilon}}{\Box x} \left(
\left.\frac{\partial g}{\partial \Box_{\epsilon} y}\right|_u
\right)  \right)\right]_{\epsilon}\eta_2 \, dx\\
&=\int_a^b \left[ \left.\frac{\partial L}{\partial y}\right|_u-\frac{\Box_{\epsilon}}{\Box x}
\left( \left.\frac{\partial L}{\partial \Box_{\epsilon}
y}\right|_u \right) \right]_{\epsilon}\eta_2 \, dx
\end{split}
\end{equation*}
and $\eta_2$ is any curve, we obtain
$$\left[ \left.\frac{\partial L}{\partial y}\right|_u- \frac{\Box_{\epsilon}}{\Box x}
\left( \left.\frac{\partial L}{\partial \Box_{\epsilon} y}\right|_u \right) \right]_{\epsilon}=0.$$
\end{proof}


\section{An example}
\label{sec:ex}

Let $f(x,y,v)=(v-\frac{\Box_{\epsilon}}{\Box x} |x|)^2$. With simple calculations, one proves that
$$\frac{\Box_{\epsilon}}{\Box x} |x|= \left\{
\begin{array}{ll}
1 & \mbox{ if } x \geq \epsilon\\
x/\epsilon-i(\epsilon-x)/\epsilon & \mbox{ if } 0 \leq x <\epsilon\\
x/\epsilon-i(\epsilon+x)/\epsilon & \mbox{ if } -\epsilon< x<0\\
-1 & \mbox{ if } x \leq -\epsilon\\
\end{array}
  \right.$$
Suppose we want to find the extremals for the functional
\begin{equation}
\label{example}
\int_{-1}^1f(x,y(x), \Box_{\epsilon}y(x))\, dx
\end{equation}
subject to the integral constraint
$$\int_{-1}^1 g(x,y(x), \Box_{\epsilon}y(x))\, dx=\frac23,$$
where $g(x,y,v)=x+y^2$, and to the boundary conditions
$y(-1)=1=y(1)$. The (nonsmooth) curve $y=|x|$ satisfies the constraint integral, and the following conditions:
\begin{enumerate}
\item{$\left[ \frac{\partial f}{\partial y}- \frac{\Box_{\epsilon}}{\Box x}
\left(\left. \frac{\partial f}{\partial \Box_{\epsilon}y} \right)
\right)  \right]_{\epsilon}=0$:

$\frac{\partial f}{\partial y}- \frac{\Box_{\epsilon}}{\Box x}
\left(\frac{\partial f}{\partial \Box_{\epsilon}y} \right)
=-\frac{\Box_{\epsilon}}{\Box x}
\left(2 \left(  \frac{\Box_{\epsilon}}{\Box x} |x|-\frac{\Box_{\epsilon}}{\Box x} |x|  \right)\right)=0$.}
\item{$\left[ \frac{\partial g}{\partial y}- \frac{\Box_{\epsilon}}{\Box x}
\left(\left. \frac{\partial g}{\partial \Box_{\epsilon}y} \right)
\right)  \right]_{\epsilon}\not=0$:

$\frac{\partial g}{\partial y}- \frac{\Box_{\epsilon}}{\Box x}
\left(\frac{\partial g}{\partial \Box_{\epsilon}y} \right)
=2|x|$.}
\item{$\lim_{\epsilon\to0} \max_{x \in [-1,1]} \left|
\frac{\partial f}{\partial y}- \frac{\Box_{\epsilon}}{\Box
x} \left(\left. \frac{\partial f}{\partial \Box_{\epsilon}y}
\right) \right) \right|= \lim_{\epsilon\to0} 0=0$.}
\item{$\lim_{\epsilon\to0} \max_{x \in [-1,1]} \left|
\frac{\partial g}{\partial y}- \frac{\Box_{\epsilon}}{\Box
x} \left(\left. \frac{\partial g}{\partial \Box_{\epsilon}y}
\right) \right) \right|=\lim_{\epsilon\to0} 2=2.$}
\end{enumerate}

Observe that, since $y=|x|$ is actually an extremal of (\ref{example}), we may take $\lambda=0$.


\section*{Acknowledgments}

Work supported by {\it Centre for
Research on Optimization and Control} (CEOC)
from the ``Funda\c{c}\~{a}o para a Ci\^{e}ncia e a Tecnologia''
(FCT), cofinanced by the European Community Fund FEDER/POCI
2010.



\end{document}